 \newtheorem{theorem}{\sc\bf Theorem}[section]
 \newtheorem{corollary}[theorem]{\sc\bf Corollary}
 \newtheorem{lemma}[theorem]{\sc\bf Lemma}
 \newtheorem{definition}[theorem]{\sc\bf Definition}
 \newtheorem{remark}[theorem]{\sc\bf Remark}
 \newtheorem{question}[theorem]{\sc\bf Question}
 \newtheorem{example}[theorem]{\sc\bf Example}
  \newtheorem{observation}[theorem]{\sc\bf Observation}
  \numberwithin{equation}{section}
\def\@cite#1#2{#1\if@tempswa , #2\fi}
\title{{\bf On $(n,k)$-quasi-$*$-paranormal operators}
\thanks{This work has been supported by National Natural Science Foundation
of China (11171066), Specialized Research Fund for the Doctoral
Program of Higher Education (2010350311001, 20113503120003), Natural
Science Foundation of Fujian Province (2011J05002, 2012J05003) and
Foundation of the Education Department of Fujian Province
(JB10042).} }
\author{Qingping \textsc{Zeng},\thanks{Email address: zqpping2003@163.com.}
 \ \  Huaijie \textsc{Zhong}
\\ \small (School of Mathematics and Computer Science, Fujian Normal
University, Fuzhou 350007, P.R. China) }
\begin{document}

\date{}
\maketitle

\large

\begin{quote}
 {\bf Abstract:} ~For nonnegative integers $n$ and $k$, we introduce in this paper a new class of $(n,k)$-quasi-$*$-paranormal operators
 satisfying
 $$||T^{1+n}(T^{k}x)||^{1/(1+n)}||T^{k}x||^{n/(1+n)} \geq
 ||T^*(T^{k}x)|| \makebox{\ for all } x \in H.$$
 This class includes the class of $n$-$*$-paranormal operators and the class of
 $(1,k)$-quasi-$*$-paranormal operators
 contains the class of $k$-quasi-$*$-class $A$ operators. We study basic properties of $(n,k)$-quasi-$*$-paranormal
 operators: (1) inclusion relations and examples; (2) a matrix
 representation; (3) joint (approximate) point spectrum and single valued extension property.
    \\
{\bf  2010 Mathematics Subject Classification:} 47A10, 47B20. \\
{\bf Key words:} $*$-paranormal operator, SVEP, joint point
spectrum, joint approximate point spectrum.

\end{quote}

\section{Introduction}

 \quad\,Let $L(H)$ stand for the $C^{*}$ algebra of all bounded linear
 operators on an infinite dimensional complex Hilbert space $H$. As
 an extension of normal operators, P. Halmos introduced the class of
 hyponormal operators (defined by $TT^{*} \leq T^{*}T$) [\cite{Halmos}]. Although there are still many interesting problems
 for hyponormal operators yet to solve (e.g., the invariant subspace
 problem), one of recent hot topics in operator theory is to study
 natural extensions of hyponormal operators. Below are some of these nonhyponormal operators. Recall that an operator $T \in L(H)$ is said
 to be

 $\bullet$ $*$-$class$ $A$ if $|T^{2}| \geq |T^*|^{2}$ (see
 [\cite{Duggal-Jeon-Kim}]).

 $\bullet$ $quasi$-$*$-$class$ $A$ if $T^{*}|T^{2}|T \geq
 T^{*}|T^*|^{2}T$ (see [\cite{Shen-Zuo-Yang}]).

 $\bullet$ $k$-$quasi$-$*$-$class$ $A$ if
$T^{*k}|T^{2}|T^{k} \geq
 T^{*k}|T^*|^{2}T^{k}$ (see [\cite{Mecheri-SM}]).

 $\bullet$ $*$-$paranormal$ if
$||T^{2}x||^{1/2}||x||^{1/2} \geq
 ||T^*x||$ for all $x \in H$ (see [\cite{Arora-Thukral}]).

$\bullet$ $quasi$-$*$-$paranormal$ if
$||T^{2}(Tx)||^{1/2}||Tx||^{1/2} \geq
 ||T^*(Tx)||$ for all $x \in H$ (see [\cite{Mecheri-AFA}]).

$\bullet$ $k$-$quasi$-$*$-$paranormal$ if
$||T^{2}(T^{k}x)||^{1/2}||T^{k}x||^{1/2} \geq
 ||T^*(T^{k}x)||$ for all $x \in H$.

 $\bullet$ $n$-$*$-$paranormal$ if
$||T^{1+n}x||^{1/(1+n)}||x||^{n/(1+n)} \geq
 ||T^*x||$ for all $x \in H$ (see [\cite{Lee-Lee-Rhoo}]).

\noindent Here and henceforth, $n,k$ denote nonnegative integers.

Clearly, if $k=1$ (respectively, $k=0$), then $k$-quasi-$*$-class
$A$ is precisely quasi-$*$-class $A$ (respectively, $*$-class $A$),
and $k$-quasi-$*$-paranormal is precisely quasi-$*$-paranormal
(respectively, $*$-paranormal); if $n=1$ (respectively, $n=0$), then
$n$-$*$-paranormal is precisely $*$-paranormal (respectively,
hyponormal). Moreover, $*$-class $A$ operators are $*$-paranormal
[\cite{Duggal-Jeon-Kim}, Theorem1.3], and $k$-quasi-$*$-class $A$
operators are $k$-quasi-$*$-paranormal (see Theorem \ref{2.1} below)
and contain $*$-class $A$.

As an extension of the classes of $n$-$*$-paranormal operators and
$k$-quasi-$*$-paranormal operators, the following definition
describes the class of operators we will study in this paper.

\begin {definition} \label{1.1}  An operator $T \in L(H)$ is said to be
$(n,k)$-$quasi$-$*$-$paranormal$ if
 $$||T^{1+n}(T^{k}x)||^{1/(1+n)}||T^{k}x||^{n/(1+n)} \geq
 ||T^*(T^{k}x)|| \makebox{ for all } x \in H.$$
\end{definition}

Clearly, if $n=1$ (respectively, $n=0$), then
$(n,k)$-quasi-$*$-paranormal is precisely $k$-quasi-$*$-paranormal
(respectively, $k$-quasihyponormal which is introduced in
[\cite{Campbell-Gupta}] and defined by $T^{*k}TT^{*}T^{k} \leq
T^{*k}T^{*}TT^{k}$); if $k=0$, then $(n,k)$-quasi-$*$-paranormal is
precisely $n$-$*$-paranormal.

In this study we give basic properties of
$(n,k)$-quasi-$*$-paranormal
 operators. In Section 2, we discuss some inclusion relations and examples related to $(n,k)$-quasi-$*$-paranormal
 operators. In Section 3, a matrix
 representation is obtained and a negative answer to a question posed by Mecheri [\cite{Mecheri-SM}] is given.
 In Section 4, we show that, for every $(n,k)$-quasi-$*$-paranormal operator
$T$, the nonzero points of its point spectrum and joint point
spectrum are identical, the nonzero points of its approximate point
spectrum and joint approximate point spectrum are identical. As a
corollary, it is also shown that $(n,k)$-quasi-$*$-paranormal
operators have the single valued extension property.

\section{Inclusion relations and examples}

  \quad\, Recall that an operator $T \in L(H)$ is said to be

 $\bullet$ normaloid if $||T||=r(T)$, where $r(T)$ is the spectral
 radius of $T$ (see [\cite{Furuta}]).

 $\bullet$ hereditarily normaloid if every part of $T$ is normaloid, where a part of $T$ means its restriction to a closed invariant subspace
 (see [\cite{Duggal-Djordjevic}]).

 $\bullet$ $n$-$paranormal$ if $||T^{1+n}x||^{1/(1+n)}||x||^{n/(1+n)} \geq
 ||Tx||$ for all $x \in H$ (see [\cite{Istratescu-Istratescu}]).

 $\bullet$ $(n,k)$-$quasiparanormal$ if
$||T^{1+n}(T^{k}x)||^{1/(1+n)}||T^{k}x||^{n/(1+n)} \geq
 ||T(T^{k}x)||$ for all $x \in H$ (see [\cite{Yuan-Ji}]).

 Clearly, $(n,0)$-quasiparanormal is precisely $n$-paranormal.

\begin {theorem} \label{2.1} The following assertions hold.

 $(1)$ If $T$ is $k$-quasi-$*$-class $A$,
then it is $k$-$quasi$-$*$-$paranormal$.

$(2)$ If $T$ is $(n,k+1)$-quasi-$*$-paranormal, then it is
$(n+1,k)$-quasiparanormal.

$(3)$ If $T$ is $(n,k)$-quasi-$*$-paranormal and $M$ is an invariant
subspace of $T$, then $T|_{M}$ is also $(n,k)$-quasi-$*$-paranormal.

$(4)$ If $T$ is $(n,k)$-quasi-$*$-paranormal, then it is
$(n,k+1)$-quasi-$*$-paranormal.

$(5)$ If $T$ is $(n,0)$-quasi-$*$-paranormal or
$(n,1)$-quasi-$*$-paranormal, then it is hereditarily normaloid.

\end{theorem}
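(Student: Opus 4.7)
The plan is to dispatch the five assertions in the order (3), (4), (1), (2), (5): the first two are essentially formal, (1) and (2) each reduce to one Cauchy--Schwarz-type estimate, and (5) will chain the previous four together with a classical fact about $n$-paranormal operators.

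For (3), I would set $S := T|_{M}$; for every $x \in M$ and $j \geq 0$ one has $S^{j}x = T^{j}x$, while $S^{*} = P_{M}T^{*}|_{M}$ with $P_{M}$ the orthogonal projection onto $M$. Hence $\|S^{*}(S^{k}x)\| = \|P_{M}T^{*}T^{k}x\| \leq \|T^{*}T^{k}x\|$, and substituting this bound into the $(n,k)$-quasi-$*$-paranormal inequality for $T$ applied to $x \in M$ yields the same inequality for $S$. Part (4) is a one-liner: apply the defining inequality for $T$ to the vector $Tx$ in place of $x$ and the outcome is the $(n,k+1)$-condition verbatim.

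For (1), the starting identity is $\||T^{2}|y\|^{2} = \langle T^{*2}T^{2}y, y\rangle = \|T^{2}y\|^{2}$, i.e.\ $\||T^{2}|y\| = \|T^{2}y\|$. The $k$-quasi-$*$-class $A$ hypothesis reads $\langle |T^{2}|T^{k}x, T^{k}x\rangle \geq \|T^{*}T^{k}x\|^{2}$; Cauchy--Schwarz on the left gives $\langle |T^{2}|T^{k}x, T^{k}x\rangle \leq \||T^{2}|T^{k}x\|\,\|T^{k}x\| = \|T^{2}T^{k}x\|\,\|T^{k}x\|$, and taking square roots yields the $k$-quasi-$*$-paranormal inequality. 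For (2), the key observation is $\|T^{k+1}x\|^{2} = \langle T^{*}(T^{k+1}x), T^{k}x\rangle \leq \|T^{*}T^{k+1}x\|\,\|T^{k}x\|$, whence $\|T^{*}T^{k+1}x\| \geq \|T^{k+1}x\|^{2}/\|T^{k}x\|$ (the degenerate case $T^{k}x = 0$ being trivial). Combined with the $(n,k+1)$-quasi-$*$-paranormal hypothesis and a short rearrangement of exponents, this produces $\|T^{n+2}T^{k}x\|^{1/(n+2)}\|T^{k}x\|^{(n+1)/(n+2)} \geq \|T^{k+1}x\| = \|T(T^{k}x)\|$, which is the $(n+1,k)$-quasiparanormal inequality.

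Part (5) then follows by chaining. By (3) the class of $(n,k)$-quasi-$*$-paranormal operators is closed under restriction to invariant subspaces, so it suffices to prove that $(n,0)$- and $(n,1)$-quasi-$*$-paranormal operators are themselves normaloid. By (4), an $(n,0)$-quasi-$*$-paranormal operator is also $(n,1)$-quasi-$*$-paranormal; and by (2), every $(n,1)$-quasi-$*$-paranormal operator is $(n+1,0)$-quasiparanormal, i.e.\ $(n+1)$-paranormal. Since $(n+1)$-paranormal operators are well known to be normaloid (Istratescu--Istratescu [\cite{Istratescu-Istratescu}]), both cases follow, and hereditariness is automatic from (3). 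The only delicate step I anticipate is the exponent bookkeeping in (2); everything else is either a direct computation or a straight application of an earlier part.
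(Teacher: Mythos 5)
Your proposal is correct and follows essentially the same route as the paper: (3) and (4) are the same formal observations, (2) is the same Cauchy--Schwarz-plus-hypothesis computation (merely rearranged), (5) is the same chaining through $(n+1)$-paranormality and Istr\v{a}\c{t}escu's normaloid theorem, and in (1) your use of $\||T^{2}|y\|=\|T^{2}y\|$ with Cauchy--Schwarz is exactly the exponent-$\tfrac12$ case of the H\"older--McCarthy inequality the paper invokes. No gaps.
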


Theorem \ref{2.1}(5) generalizes [\cite{Mecheri-SM}, Theorem 2.6].
On the other hand, for $k \geq 2$, there exists an
$(n,k)$-quasi-$*$-paranormal but not normaloid operator (see Example
\ref{2.3}(4) below).

\begin{proof} (1) For all $x \in H$, we have
 $$(T^{*k}|T^*|^{2}T^{k}x, x) = (T^{*k}TT^*T^{k}x, x) =(T^*T^{k}x, T^*T^{k}x)=||T^*T^{k}x||^{2}$$
 and by H\"{o}lder-McCarthy inequality [\cite{McCarthy}, Lemma 2.1],
    \begin{align*} \qquad\qquad\qquad
      (T^{*k}|T^{2}|T^{k}x, x)  &=  (|T^{2}|T^{k}x, T^{k}x) \\ & \leq (T^{*2}T^{2}T^{k}x, T^{k}x)^{1/2} ||T^{k}x||^{2(1-1/2)}  \\
       &= ||T^{k+2}x||||T^{k}x||.
     \end{align*}
Since $T$ is $k$-quasi-$*$-class $A$, $||T^{k+2}x||||T^{k}x|| \geq
||T^*(T^{k}x)||^{2}$, for all $x \in H$. Hence $T$ is
$k$-quasi-$*$-paranormal.

 (2) Since $T$ is $(n,k+1)$-quasi-$*$-paranormal, we have, for all $x \in
 H$,
 \begin{align*} \qquad\qquad\qquad
      ||T^{k+1}x||^{2n+2}  &=  (T^{k+1}x, T^{k+1}x)^{n+1} \\ &= (T^*T^{k+1}x,
      T^{k}x)^{n+1} \\
        & \leq ||T^*T^{k+1}x||^{n+1}||T^{k}x||^{n+1} \\  &\leq ||T^{1+n}T^{k+1}x||||T^{k+1}x||^{n}||T^{k}x||^{n+1}.
     \end{align*}
Therefore, $||T(T^{k}x)||^{n+2} \leq ||T^{n+2}(T^{k}x)||
||T^{k}x||^{n+1}$, for all $x \in
 H$. Hence $T$ is
$(n+1,k)$-quasiparanormal.

 (3) It is clear.

 (4) It follows by taking $x=Tz$ in the definition.

 (5) By (3) and (4), it needs only to show that $T$ is normaloid when $T$ is $(n,1)$-quasi-$*$-paranormal. By (2), we
 have that $T$ is $(n+1,0)$-quasiparanormal, that is, $T$ is
 $(n+1)$-paranormal. It then follows form [\cite{Istratescu-Istratescu}, Theorem 1] that $T$ is normaloid.
\end{proof}

In order to establish the proper inclusion relation among the class
of $(n,k)$-quasi-$*$-paranormal operators and that of
$(n,k+1)$-quasiparanormal operators, the following lemma is needful.

\begin {lemma} \label{2.2}
  An operator $T \in L(H)$ is $(n,k)$-quasi-$*$-paranormal if and
  only if \begin{equation} \label{eq 2.1} \qquad \qquad
T^{*k} T^{*(1+n)}T^{1+n}T^{k} - (1+n) \mu^{n} T^{*k}TT^{*}T^{k} + n
 \mu^{1+n}T^{*k}T^{k} \geq 0
 \end{equation}
 for all $\mu > 0$.
\end{lemma}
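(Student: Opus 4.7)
The plan is to convert the pointwise norm inequality into a family of pointwise quadratic inequalities parametrized by $\mu > 0$ via the classical Young/AM--GM trick, and then read off the operator inequality (\ref{eq 2.1}) from the quadratic form characterization of positivity.

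First, I would square the defining inequality and raise it to the $(1+n)$-th power, rewriting
$$\|T^{1+n}(T^{k}x)\|^{1/(1+n)}\|T^{k}x\|^{n/(1+n)} \geq \|T^{*}(T^{k}x)\|$$
as $A^{1/(n+1)} B^{n/(n+1)} \geq C$, where $A := \|T^{1+n}T^{k}x\|^{2}$, $B := \|T^{k}x\|^{2}$, $C := \|T^{*}T^{k}x\|^{2}$. The key analytic input is the scaled weighted AM--GM identity
$$A^{1/(n+1)} B^{n/(n+1)} \;=\; \inf_{\mu>0}\left\{\tfrac{1}{n+1}\mu^{-n}A + \tfrac{n}{n+1}\mu B\right\},$$
which one proves by optimizing over $\mu$ (the critical point $\mu = A/B$ reproduces the geometric mean).

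Second, I would use this identity to show that $A^{1/(n+1)}B^{n/(n+1)} \geq C$ holds if and only if $\tfrac{1}{n+1}\mu^{-n}A + \tfrac{n}{n+1}\mu B \geq C$ for every $\mu > 0$, and then multiply through by $(n+1)\mu^{n}$ to obtain the clean equivalent form
$$A + n\mu^{1+n} B - (1+n)\mu^{n} C \geq 0 \quad\text{for all } \mu>0.$$
Substituting back the norms gives, for every $x \in H$ and every $\mu>0$,
$$\|T^{1+n}T^{k}x\|^{2} - (1+n)\mu^{n}\|T^{*}T^{k}x\|^{2} + n\mu^{1+n}\|T^{k}x\|^{2} \geq 0.$$

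Third, I would rewrite each squared norm as an inner product: $\|T^{1+n}T^{k}x\|^{2} = \langle T^{*k}T^{*(1+n)}T^{1+n}T^{k}x, x\rangle$, $\|T^{*}T^{k}x\|^{2} = \langle T^{*k}TT^{*}T^{k}x, x\rangle$, and $\|T^{k}x\|^{2} = \langle T^{*k}T^{k}x, x\rangle$. Since a selfadjoint operator $S$ is positive if and only if $\langle Sx,x\rangle \geq 0$ for all $x \in H$, the previous scalar inequality holding for all $x \in H$ is equivalent to the operator inequality (\ref{eq 2.1}) for the given $\mu$, and quantifying over $\mu>0$ completes the equivalence.

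The whole argument is a transcription of the parametric AM--GM trick, so there is no genuine obstacle; the only point where care is needed is tracking the powers of $\mu$ correctly so that the final coefficients $1$, $-(1+n)\mu^{n}$, and $n\mu^{1+n}$ come out matching the statement exactly.
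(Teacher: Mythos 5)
Your proposal is correct and follows essentially the same route as the paper: the forward direction is the weighted AM--GM inequality applied to the quadratic forms, the converse comes from choosing the optimal $\mu$, and positivity of the selfadjoint expression in (\ref{eq 2.1}) is read off from its quadratic form at $x$. One small correction: the minimizer of $\tfrac{1}{n+1}\mu^{-n}A+\tfrac{n}{n+1}\mu B$ is $\mu=(A/B)^{1/(n+1)}$ (the value the paper substitutes), not $\mu=A/B$; also note your infimum identity silently absorbs the degenerate case $A=0$ (or $B=0$), which the paper instead treats by a separate limiting argument ($\mu\rightarrow 0$ after multiplying by $\mu^{-n}$).
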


This is a generalization of [\cite{Patel}, Theorem 4.14].

\begin{proof} The proof is similar to [\cite{Yuan-Ji}, Lemma 2.2] and [\cite{Yuan-Gao}, Lemma 2.2]. Let $T$ be
$(n,k)$-quasi-$*$-paranormal. It then follows form the weighted
arithmetic-geometric mean inequality that
    \begin{align*} \qquad \qquad\qquad
       &\quad \ \frac{1}{1+n}(\mu^{-n}|T^{1+n}|^{2}T^{k}x,T^{k}x) + \frac{n}{1+n}(\mu T^{k}x,T^{k}x)
        \\ &
       \geq (\mu^{-n}|T^{1+n}|^{2}T^{k}x,T^{k}x)^{\frac{1}{1+n}} (\mu T^{k}x,T^{k}x)^{\frac{n}{1+n}}
        \\ & = (|T^{1+n}|^{2}T^{k}x,T^{k}x)^{\frac{1}{1+n}} (T^{k}x,T^{k}x)^{\frac{n}{1+n}}
        \\ & \geq (|T^{*}|^{2}T^{k}x, T^{k}x) = (TT^{*}T^{k}x, T^{k}x).
     \end{align*}

Conversely, by (\ref{eq 2.1}), we have
\begin{equation} \label{eq 2.2}
(T^{*k} T^{*(1+n)}T^{1+n}T^{k}x,x) - (1+n) \mu^{n}
(T^{*k}TT^{*}T^{k}x,x) + n
 \mu^{1+n}(T^{*k}T^{k}x,x) \geq 0
 \end{equation}
 for all $x \in H.$
If $|T^{1+n}|^{2}T^{k}x,T^{k}x)=0,$ multiplying (\ref{eq 2.2}) by
$\mu^{-n}$ and letting $\mu \rightarrow 0$ we have
$(TT^{*}T^{k}x,T^{k}x)=0,$ thus
$$||T^{1+n}(T^{k}x)|| ||T^{k}x||^{n} \geq
 ||T^*(T^{k}x)||^{1+n}.$$
If $|T^{1+n}|^{2}T^{k}x,T^{k}x)>0,$ putting
$$\mu = \Big{(}\frac{(|T^{1+n}|^{2}T^{k}x, T^{k}x) }{(T^{k}x, T^{k}x) } \Big{)}^{\frac{1}{1+n}} $$
in (\ref{eq 2.2}) we have
$$(|T^{1+n}|^{2}T^{k}x, T^{k}x)^{1/(1+n)}(T^{k}x, T^{k}x)^{n/(1+n)} \geq
 (TT^*T^{k}x, T^{k}x).$$ So, $T$ is $(n,k)$-quasi-$*$-paranormal.
\end{proof}

To illustrate the result established in Theorem \ref{2.1}, we
consider the following

\begin{example} \label{2.3} \begin{upshape}
(1) An example of $*$-paranormal but not $*$-class $A$ operator.

Similar to an argument of Ando [\cite{Ando}], Duggal et al. showed
in [\cite{Duggal-Jeon-Kim}, p960] that there exists a $*$-paranormal
operator $T \in L(K)$ such that $T \otimes T$ is not $*$-paranormal:
$$ T:=T_{A,B}= \left(
                                                         \begin{array}{cccccc}
                                                           0 & 0 & 0 & 0 & 0 & \cdots \\
                                                           A & 0 & 0 & 0 & 0 & \cdots  \\
                                                           0 & B & 0 & 0 & 0 & \cdots  \\
                                                           0 & 0 & B & 0 & 0 & \cdots  \\
                                                           0 & 0 & 0 & B & 0 & \cdots  \\
                                                           \vdots & \vdots & \vdots & \vdots & \vdots & \cdots  \\
                                                         \end{array}
                                                       \right)
                                                       \makebox{\ on \ } K=\bigoplus\limits_{n=1}^{\infty}H,$$
where $\dim H=2,$ $A=\left(
                        \begin{array}{cc}
                          1 & 1 \\
                          1 & 2 \\
                        \end{array}
                      \right)^{1/2}
$ and $B=\left(
                        \begin{array}{cc}
                          1 & 2 \\
                          2 & 8 \\
                        \end{array}
                      \right)^{1/4}.$ From [\cite{Duggal-Jeon-Kim}, Theorem 3.2], it follows that $T$
is not $*$-class $A$. Otherwise, $T \otimes T$ is $*$-class $A$,
which is impossible.

(2) An example of $(n+1,k)$-quasiparanormal but not
$(n,k+1)$-quasi-$*$-paranormal operator, for nonnegative integers
$n,k$.

Let $U$ be the unilateral right shift operator on
$l_{2}(\mathbb{N})$ with the canonical orthogonal basis
$\{e_{m}\}_{m=1}^{\infty}$ defined by $Ue_{m}=e_{m+1}$ for all $m
\in \mathbb{N}$. Put $$T=\left(
                        \begin{array}{cc}
                          1 & \alpha e_{1} \otimes e_{1}  \\
                          0 & U+1 \\
                        \end{array}
                      \right) \makebox{\ on \ } H=\mathbb{C}e_{1} \oplus
                      l_{2}(\mathbb{N}).
$$ Uchiyama proved in [\cite{Uchiyama}] that $T$ is paranormal for
$0 < \alpha < \frac{1}{4}$ and $\mathrm{ker}(T-1)=\mathbb{C}e_{1}
\oplus \{0\}$ does not reduce $T$. By [\cite{Yuan-Ji}, Theorem 2.1],
we have that $T$ is $(n+1,k)$-quasiparanormal for all nonnegative
integers $n,k$. But, by Theorem \ref{4.1} below, $T$ is not
$(n,k+1)$-quasi-$*$-paranormal for all nonnegative integers $n,k$,
because $\mathrm{ker}(T-1)$ does not reduce $T$.

(3) An example of $(n,k+1)$-quasi-$*$-paranormal but not
$(n,k)$-quasi-$*$-paranormal operator, for nonnegative integer $n$
and positive integer $k \geq 1$.

Given a wight sequence $\{w_{m}\}_{m=1}^{\infty}$ of bounded and
positive numbers, let $T$ be the unilateral weighted right shift
operator on $l_{2}(\mathbb{N})$ with the canonical orthogonal basis
$\{e_{m}\}_{m=1}^{\infty}$ defined by $Te_{m}=w_{m}e_{m+1}$ for all
$m \in \mathbb{N}$. A routine calculation show that (see
[\cite{Yuan-Ji}, Example 2.3]), $TT^{*}=0 \oplus w_{1}^{2} \oplus
w_{2}^{2} \oplus \cdots ,$ and for each positive integer $m$,
\begin{equation} \label{eq 2.3} T^{*m}T^{m}=w_{1}^{2}\cdots
w_{m}^{2} \oplus w_{2}^{2}\cdots w_{m+1}^{2} \oplus w_{3}^{2}\cdots
 w_{m+2}^{2}
\oplus \cdots  \makebox{\quad on \ } l_{2}(\mathbb{N}),
\end{equation}
\begin{equation} \label{eq 2.4}
T^{*m}TT^{*}T^{m}=w_{1}^{2}\cdots w_{m}^{2}w_{m}^{2} \oplus
w_{2}^{2}\cdots w_{m+1}^{2}w_{m+1}^{2} \oplus w_{3}^{2}\cdots
 w_{m+2}^{2}w_{m+2}^{2}
\oplus \cdots  \makebox{ \ on \ } l_{2}(\mathbb{N}).
\end{equation}
By (\ref{eq 2.3}), (\ref{eq 2.4}) and Lemma \ref{2.2}, $T$ is
$(n,k)$-quasi-$*$-paranormal if and only if, for all $\mu > 0$,
\begin{equation*}  \quad
        \left\{ \begin{aligned}
           w_{k+1}^{2}\cdots w_{k+n+1}^{2}-(n+1)\mu^{n}w_{k}^{2}+n\mu^{n+1}  & \geq
           0,
         \\ w_{k+2}^{2}\cdots w_{k+n+2}^{2}-(n+1)\mu^{n}w_{k+1}^{2}+n\mu^{n+1}  & \geq
           0,
         \\ w_{k+3}^{2}\cdots w_{k+n+3}^{2}-(n+1)\mu^{n}w_{k+2}^{2}+n\mu^{n+1}  & \geq
           0,
         \\ \cdots\cdots. \qquad\qquad\qquad\qquad
         \end{aligned} \right.
\end{equation*}
It then follows that $T$ is $(n,k)$-quasi-$*$-paranormal if and only
if \begin{equation} \label{eq 2.5} \quad
        \left\{ \begin{aligned}
        w_{k}^{n+1} &  \leq w_{k+1}\cdots w_{k+n+1} ,
         \\ w_{k+1}^{n+1}  & \leq w_{k+2}\cdots w_{k+n+2},
         \\ w_{k+2}^{n+1}  & \leq w_{k+3}\cdots w_{k+n+3},
         \\ & \cdots\cdots.
         \end{aligned} \right.
\end{equation}
So, if $w_{k+1} \leq w_{k+2} \leq w_{k+3} \leq \cdots$ and $w_{k} >
w_{k+n+1}$, then $T$ is $(n,k+1)$-quasi-$*$-paranormal but not
$(n,k)$-quasi-$*$-paranormal.

(4)  An example of $(n,k)$-quasi-$*$-paranormal but not normaloid
operator, for $k \geq 2$.

Let $T$ be the unilateral weighted right shift operator as in (3)
and let
$$w_{1} > w_{2} = \cdots =w_{k}=w_{k+1}=  \cdots.$$ Then it is
obvious that $||T||=w_{1}$ and $$r(T)=\lim\limits_{m \rightarrow
\infty}||T^{m}||^{1/m} = w_{2},$$ therefore $T$ is not normaloid. By
(\ref{eq 2.5}), we have $T$ is $(n,k)$-quasi-$*$-paranormal.

(5)  An example of normaloid but not $(n,1)$-quasi-$*$-paranormal
operator, for all nonnegative integer $n$.

Let $S$ be an operator on $l_{2}(\mathbb{N})$ with the canonical
orthogonal basis $\{e_{m}\}_{m=1}^{\infty}$ defined by
 $$Se_{m}=\left\{ \begin{aligned}
          e_{m+1} & \quad \makebox{if} \, \,  m= 2l-1, \, l \in \mathbb{N}, \\
                   0 \quad &\quad  \makebox{if} \, \,  m= 2l, \, l \in \mathbb{N}.
                           \end{aligned} \right.$$
Put $$T=\left(
                        \begin{array}{cc}
                          1 & 0  \\
                          0 & S \\
                        \end{array}
                      \right) \makebox{\ on \ } H=\mathbb{C}e_{1} \oplus
                      l_{2}(\mathbb{N}).
$$ Then a simple calculation shows that $T^{m}= \left(
                        \begin{array}{cc}
                          1 & 0  \\
                          0 & 0 \\
                        \end{array}
                      \right) $ for all $m \geq 2$ and  $T^{*}T=\left(
                        \begin{array}{cc}
                          1 & 0  \\
                          0 & P \\
                        \end{array}
                      \right) \makebox{ on } H=\mathbb{C}e_{1} \oplus
                      l_{2}(\mathbb{N}),$ where $P$ is the
                      projection onto the closed subspace spanned by $\{e_{1},e_{3}, e_{5}, \cdots
                      \}.$ Hence $T$ is normaloid because $||T^{m}||=||T||^{m}=1$.
We claim that $T$ is not $(n,1)$-quasi-$*$-paranormal, for all
nonnegative integer $n$.
                      In fact, Lemma \ref{2.2} shows that $T$ is $(n,1)$-quasi-$*$-paranormal
                      if and only if
$$T^{*} T^{*(1+n)}T^{1+n}T - (1+n) \mu^{n} T^{*}TT^{*}T + n
 \mu^{1+n}T^{*}T \geq 0$$
 for all $\mu > 0$. Putting $\mu =1$, we obtain
\begin{align*}  & \qquad T^{*} T^{*(1+n)}T^{1+n}T - (1+n)T^{*}TT^{*}T + nT^{*}T \\
        & \,\,\, = \left(\begin{array}{cc}
                          1 & 0  \\
                          0 & 0 \\
                        \end{array}
                      \right)  -(1+n)\left(\begin{array}{cc}
                          1 & 0  \\
                          0 & P \\
                        \end{array}
                      \right)+ n \left(\begin{array}{cc}
                          1 & 0  \\
                          0 & P \\
                        \end{array}
                      \right) = \left(\begin{array}{cc}
                          0 & 0  \\
                          0 & -P \\
                        \end{array}
                      \right)  < 0.
     \end{align*}
Hence $T$ is not $(n,1)$-quasi-$*$-paranormal.

\end{upshape}
\end{example}

\section{A matrix representation}

\quad\,The following observation is a structure property for
$(n,k)$-quasi-$*$-paranormal operators.

\begin {observation} \label{3.1}
Suppose that $T^{k}H$ is not dense. Let $$ T= \left(
                                              \begin{array}{cc}
                                                T_{1} &  T_{2} \\
                                                0 &  T_{3} \\
                                              \end{array}
                                            \right)
\makebox{ on } H = \overline{T^{k}H} \oplus \mathrm{ker}(T^{*k}) $$
where $\overline{T^{k}H}$ is the closure of $T^{k}H$. If $T$ is
$(n,k)$-quasi-$*$-paranormal, then $T_{1}$ is $n$-$*$-paranormal,
$T_{3}^{k}=0$ and $\sigma(T)=\sigma(T_{1}) \cup \{0\}.$

\end{observation}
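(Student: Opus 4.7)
The plan is to work with the orthogonal decomposition $H=\overline{T^kH}\oplus\ker(T^{*k})$ (using $\ker(T^{*k})=(T^kH)^\perp$), letting $P$ denote the projection onto $\overline{T^kH}$ and $Q=I-P$. Since $T(T^kH)\subseteq T^{k+1}H\subseteq T^kH$, the subspace $\overline{T^kH}$ is $T$-invariant, which yields the upper-triangular block form with $T_1=PTP|_{\overline{T^kH}}$, $T_2=PTQ|_{\ker(T^{*k})}$, $T_3=QTQ|_{\ker(T^{*k})}$, and the $(2,1)$-entry equal to zero.

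For the claim $T_3^k=0$, I would expand $T^k$ in block form; since the matrix is upper triangular, its $(2,2)$-entry is exactly $T_3^k$. Thus for every $y\in\ker(T^{*k})$ we have $QT^k y=T_3^k y$. On the other hand $T^k y\in T^kH\subseteq\overline{T^kH}$, so $QT^k y=0$, and $T_3^k=0$ follows immediately. This step is essentially automatic once the block decomposition is set up correctly.

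For the claim that $T_1$ is $n$-$*$-paranormal, I would apply the defining inequality of Definition \ref{1.1} to vectors of the form $z\in H$ and set $u=T^kz$, obtaining
$$\|T^{1+n}u\|^{1/(1+n)}\|u\|^{n/(1+n)}\geq\|T^*u\|\quad\text{for all } u\in T^kH.$$
By continuity of each side, this inequality extends to all $u\in\overline{T^kH}$. For such $u$, the $T$-invariance of $\overline{T^kH}$ gives $T_1^{1+n}u=T^{1+n}u$, while $T_1^*u=PT^*u$, so $\|T^*u\|\geq\|PT^*u\|=\|T_1^*u\|$. Combining these yields the $n$-$*$-paranormal inequality for $T_1$.

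For the spectral identity $\sigma(T)=\sigma(T_1)\cup\{0\}$, I would use the standard analysis of upper-triangular operator matrices. Since $T_3^k=0$ and $\ker(T^{*k})\neq\{0\}$ (by the density hypothesis), $\sigma(T_3)=\{0\}$, so the inclusion $\sigma(T)\subseteq\sigma(T_1)\cup\sigma(T_3)=\sigma(T_1)\cup\{0\}$ is immediate. For the reverse inclusion, the hypothesis that $T^kH$ is not dense forces $T$ to have non-dense range, so $0\in\sigma(T)$. For a nonzero $\lambda\in\sigma(T_1)$, the operator $T_3-\lambda$ is invertible; a direct verification (injectivity and surjectivity of $T-\lambda$ reduce on the quotient to those of $T_1-\lambda$ once $T_3-\lambda$ is inverted) shows that if $T-\lambda$ were invertible then $T_1-\lambda$ would be, a contradiction. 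Hence $\lambda\in\sigma(T)$, completing the equality. The main obstacle, if any, is the bookkeeping in this last step, but it is a standard triangular-matrix spectral argument and not a serious difficulty.
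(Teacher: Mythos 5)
Your proposal is correct and follows essentially the same route as the paper: restrict the defining inequality to $u\in T^kH$, extend by continuity to $\overline{T^kH}$, use $T_1^{1+n}u=T^{1+n}u$ and $\|T_1^*u\|\leq\|T^*u\|$ for $n$-$*$-paranormality of $T_1$, read off $T_3^k=0$ from $T^k(\ker T^{*k})\subseteq\overline{T^kH}$, and deduce the spectral equality from the upper-triangular structure. The paper merely asserts $\sigma(T)=\sigma(T_1)\cup\{0\}$; your direct verification (both inclusions via invertibility of $T_3-\lambda$ for $\lambda\neq 0$ and $0\in\sigma(T)$ from non-dense range) is a sound and self-contained way to fill in that step.
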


\begin{proof} Since $T_{1}^{1+n}z=T^{1+n}z$ for all $z \in
\overline{T^{k}H}$, $T_{1}$ is $n$-$*$-paranormal. Let $x \in
\mathrm{ker}(T^{*k})$. Then
$$ T^{k}x= \left(
                                              \begin{array}{cc}
                                                T_{1}^{k} &  \sum_{i=0}^{k-1}T_{1}^{i}T_{2}T_{3}^{k-1-i} \\
                                                0 &  T_{3}^{k} \\
                                              \end{array}
                                            \right)(0\oplus x) \in
                                            \overline{T^{k}H}. $$
Hence $T_{3}^{k}=0$ and $\sigma(T)=\sigma(T_{1}) \cup \{0\}.$
\end{proof}

The above matrix representation of $(n,k)$-quasi-$*$-paranormal
operators motivates the following

\begin {question} \label{3.2} Let $H, K$ be two
infinite dimensional complex Hilbert spaces. If $A$ is
$n$-$*$-paranormal and $C^{k}=0$, then the operator matrix $$ T=
\left(
                                              \begin{array}{cc}
                                                A &  B \\
                                                0 &  C \\
                                              \end{array}
                                            \right)$$ acting on $H \oplus K$ is
                                            $(n,k)$-quasi-$*$-paranormal $?$
\end{question}

Before giving a negative answer to this question, we present the
following

\begin {theorem} \label{3.3}
 Let $T$ be an operator on $H \oplus K$ defined as $$ T= \left(
                                              \begin{array}{cc}
                                                A &  B \\
                                                0 &  C \\
                                              \end{array}
                                            \right).$$
If $A$ is $n$-$*$-paranormal and surjective and $C^{k}=0$, then $T$
is similar to an $(n,k)$-quasi-$*$-paranormal operator.

\end{theorem}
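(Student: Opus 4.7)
The plan is to construct a bounded invertible operator $S$ on $H \oplus K$ such that $STS^{-1} = A \oplus C$, and then verify directly that $A \oplus C$ is $(n,k)$-quasi-$*$-paranormal. This splits the proof into a Sylvester-equation step that eliminates the off-diagonal block $B$, followed by a short block-diagonal calculation.

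For the similarity, I look for $X \in L(K,H)$ satisfying $AX - XC = B$. Taking $S$ to be the upper-triangular block matrix on $H \oplus K$ with diagonal entries $I$ and upper-right entry $X$ (so that $S^{-1}$ has upper-right entry $-X$), a routine block multiplication gives
\[
STS^{-1} = \begin{pmatrix} A & -AX + B + XC \\ 0 & C \end{pmatrix} = A \oplus C.
\]
Since $A$ is bounded and surjective on a Hilbert space, the open mapping theorem furnishes a bounded right inverse $A'$ with $AA' = I_H$ (e.g., the inverse of $A|_{(\ker A)^{\perp}}$ composed with the inclusion). Because $C^{k} = 0$, the finite sum
\[
X := \sum_{i=0}^{k-1}(A')^{i+1} B\, C^{i}
\]
is a well-defined bounded operator; using the identity $A(A')^{i+1} = (A')^i$ and reindexing, I get $AX = B + \sum_{i=1}^{k-1}(A')^{i} B\, C^{i}$ and $XC = \sum_{j=1}^{k-1}(A')^{j} B\, C^{j}$ (the $j=k$ term vanishing because $C^{k} = 0$). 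Subtraction yields $AX - XC = B$.

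For the $(n,k)$-quasi-$*$-paranormality of $A \oplus C$, take $x = x_1 \oplus x_2 \in H \oplus K$. Nilpotency of $C$ gives $(A \oplus C)^{k} x = A^{k} x_1 \oplus 0$, so $(A \oplus C)^{1+n}(A \oplus C)^{k} x = A^{n+k+1} x_1 \oplus 0$ and $(A \oplus C)^{*}(A \oplus C)^{k} x = A^{*} A^{k} x_1 \oplus 0$. The defining inequality therefore collapses to
\[
\|A^{1+n}(A^{k} x_1)\|^{1/(1+n)}\, \|A^{k} x_1\|^{n/(1+n)} \;\geq\; \|A^{*}(A^{k} x_1)\|,
\]
which is precisely the $n$-$*$-paranormal inequality for $A$ applied to the vector $y = A^{k} x_1 \in H$.

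The main obstacle — and the reason the surjectivity hypothesis on $A$ is needed rather than just $n$-$*$-paranormality — is the solvability of $AX - XC = B$ in bounded operators. The classical Rosenblum criterion $\sigma(A) \cap \sigma(C) = \emptyset$ would be too strong here; the substitute is that the nilpotency $C^{k} = 0$ truncates the natural Neumann-type series $\sum_{i\geq 0}(A')^{i+1} B\, C^{i}$ after finitely many nonzero terms, so a bounded right inverse of $A$ is enough. Once $X$ is in hand, the similarity reduction and the block-diagonal check are essentially formal.
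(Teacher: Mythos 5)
Your proposal is correct, and its overall architecture is the same as the paper's: conjugate $T$ by $\left(\begin{smallmatrix} I & X \\ 0 & I \end{smallmatrix}\right)$ after solving the Sylvester equation $AX-XC=B$, and then verify directly that the block-diagonal operator $A\oplus C$ satisfies the $(n,k)$-quasi-$*$-paranormal inequality (your reduction of that inequality to the $n$-$*$-paranormality of $A$ applied to $y=A^{k}x_{1}$ matches the paper's computation). The genuine difference is how the Sylvester equation is solved. The paper observes that surjectivity of $A$ and nilpotency of $C$ give $\sigma_{s}(A)\cap\sigma_{a}(C)=\varnothing$ and then invokes a Rosenblum-type theorem from Laursen--Neumann (Theorem 3.5.1(c)) to produce $X$; you instead build $X=\sum_{i=0}^{k-1}(A')^{i+1}BC^{i}$ explicitly from a bounded right inverse $A'$ of $A$ (which exists by the open mapping theorem applied to $A|_{(\ker A)^{\perp}}$), using $C^{k}=0$ to truncate the series, and your telescoping computation $AX-XC=B$ is correct. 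Your route is more elementary and self-contained, avoiding local spectral theory entirely and also the remark you make about the classical disjoint-spectra criterion being unavailable here; the paper's route is shorter on the page and, being based on the spectral condition $\sigma_{s}(A)\cap\sigma_{a}(C)=\varnothing$, would extend verbatim to non-nilpotent $C$ whose approximate point spectrum avoids the surjectivity spectrum of $A$, whereas your finite-sum construction uses nilpotency of $C$ in an essential way (which is all the theorem as stated requires).
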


\begin{proof}  Since $A$ is surjective and $C^{k}=0$, we have $\sigma_{s}(A) \cap \sigma_{a}(C) =
\varnothing$, where $\sigma_{s}(\cdot)$ and $\sigma_{a}(\cdot)$
denote the surjective spectrum and the approximative point spectrum
respectively. It then follows from part (c) of Theorem 3.5.1 in
[\cite{Laursen-Neumann}] that there exist some $S \in L(K,H)$ for
which $AS-SC=B$. Since
$$\left(
                                              \begin{array}{cc}
                                                I &  S \\
                                                0 &  I \\
                                              \end{array}
                                            \right) \left(
                                              \begin{array}{cc}
                                                A &  B \\
                                                0 &  C \\
                                              \end{array}
                                            \right) = \left(
                                              \begin{array}{cc}
                                                A &  0 \\
                                                0 &  C \\
                                              \end{array}
                                            \right)\left(
                                              \begin{array}{cc}
                                                I &  S \\
                                                0 &  I \\
                                              \end{array}
                                            \right),$$
it follows that $T$ is similar to $R:= \left(
                                              \begin{array}{cc}
                                                A &  0 \\
                                                0 &  C \\
                                              \end{array}
                                            \right).$
Let $x= x_{1} \oplus x_{2} \in H \oplus K$. Since $A$ is
$n$-$*$-paranormal and $C^{k}=0$, we have
  \begin{align*} \qquad \qquad
      ||R^*(R^{k}x)|| & =  ||R^*(R^{k}(x_{1} \oplus x_{2}))|| = ||A^*(A^{k}x_{1})||
        \\ & \leq
       ||A^{1+n}(A^{k}x_{1})||^{1/(1+n)}||A^{k}x_{1}||^{n/(1+n)} \\ & = ||R^{1+n}(R^{k}(x_{1} \oplus x_{2}))||^{1/(1+n)}||R^{k}(x_{1} \oplus x_{2})||^{n/(1+n)}
       \\ & = ||R^{1+n}(R^{k}x)||^{1/(1+n)}||R^{k}x||^{n/(1+n)}.
     \end{align*}
  Thus $T$ is similar to an $(n,k)$-quasi-$*$-paranormal operator.
\end{proof}

The following simple example provides a negative answer to Question
\ref{3.2} and a question posed by Mecheri [\cite{Mecheri-SM}]: Is
the operator matrix $$ T= \left(
                                              \begin{array}{cc}
                                                A &  B \\
                                                0 &  C \\
                                              \end{array}
                                            \right)$$
acting on $H \oplus K$ is $k$-quasi-$*$-class $A$ if $A$ is
$*$-class $A$ and $C^{k}=0$? It also shows us that
[\cite{Mecheri-SM}, Theorem 2.1] is not correct.

\begin{example} \label{3.4} \begin{upshape} Let $l_{2}(\mathbb{N})$ with the canonical orthogonal basis
$\{e_{m}\}_{m=1}^{\infty}$ and put $$T=\left(
                        \begin{array}{cc}
                          A & B  \\
                          0 & C \\
                        \end{array}
                      \right)= \left(
                        \begin{array}{cc}
                          0 & I  \\
                          0 & 0 \\
                        \end{array}
                      \right) \makebox{\ on \ } H= l_{2}(\mathbb{N}) \oplus
                      l_{2}(\mathbb{N}).
$$ Obviously, $A=0$ is $n$-$*$-paranormal (respectively, $*$-class $A$) for all nonnegative integer $n$ and
$C^{k}=0^{k}=0$, since we know that $k \geq 1$ from the assumption.
However, $T$ is not $(n,k)$-quasi-$*$-paranormal for all nonnegative
integer $n$, since
 \begin{align*} \qquad \qquad
      & \qquad ||T^{*}(e_{1}\oplus 0)||= ||\left(
                        \begin{array}{cc}
                          0 & 0  \\
                          I & 0 \\
                        \end{array}
                      \right)(e_{1}\oplus 0)|| = ||0 \oplus e_{1}||
        \\ & \,\,\, = 1 > 0 =||T^{1+n+k}(e_{1}\oplus 0)||^{1/(1+n)}||T^{k}(e_{1}\oplus
                      0)||^{n/(1+n)}.
     \end{align*}
 In particular, $T$ is not $(1,k)$-quasi-$*$-paranormal. Hence by Theorem \ref{2.1}(1), $T$ is not $k$-quasi-$*$-class
 $A$.

\end{upshape}
\end{example}

\section{Joint (approximate) point spectrum and SVEP}

\quad\,[\cite{Uchiyama}, Theorem] illustrated that the following
result is not true even for paranormal operators.

\begin {theorem} \label{4.1}
 Let $T$ be $(n,k)$-quasi-$*$-paranormal and $0 \neq \lambda \in
 \mathbb{C}$.

 $(1)$ If $(T-\lambda)x=0,$ then $(T^{*} - \overline{\lambda})x=0.$
 Consequently
 $$\mathrm{ker}(T-\lambda) \subseteq \mathrm{ker}(T^{*}-\overline{\lambda}).$$

 $(2)$ If $(T-\lambda)x_{m} \rightarrow 0$ for a sequence $\{x_{m}\}_{m=1}^{\infty}$ of unit vectors, then $(T^{*} -
 \overline{\lambda}) x_{m} \rightarrow 0$.
\end{theorem}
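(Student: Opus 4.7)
For part (1), the plan is to substitute the eigenvector $x$ directly into the defining inequality for $(n,k)$-quasi-$*$-paranormality. Since $Tx=\lambda x$, one has $T^{k}x=\lambda^{k}x$ and $T^{1+n+k}x=\lambda^{1+n+k}x$, so the left-hand side
\[
\|T^{1+n}(T^{k}x)\|^{1/(1+n)}\|T^{k}x\|^{n/(1+n)}
\]
collapses to $|\lambda|^{1+k}\|x\|$ after combining exponents; indeed, $\tfrac{1+n+k}{1+n}+\tfrac{nk}{1+n}=1+k$. The right-hand side equals $|\lambda|^{k}\|T^{*}x\|$. Dividing by $|\lambda|^{k}$, which is legitimate since $\lambda\neq 0$, produces the key estimate $\|T^{*}x\|\le|\lambda|\,\|x\|$. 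Then the classical identity
\[
\|T^{*}x-\overline{\lambda}x\|^{2}=\|T^{*}x\|^{2}-|\lambda|^{2}\|x\|^{2},
\]
obtained by expanding the norm and using $Tx=\lambda x$ to evaluate the cross terms, forces $T^{*}x=\overline{\lambda}x$.

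For part (2), I would execute the same strategy asymptotically. From $(T-\lambda)x_{m}\to 0$ and the telescoping identity $T^{j}-\lambda^{j}=(T-\lambda)\sum_{i=0}^{j-1}\lambda^{j-1-i}T^{i}$, one obtains $(T^{j}-\lambda^{j})x_{m}\to 0$ for every $j\ge 0$. Hence $\|T^{1+n+k}x_{m}\|\to|\lambda|^{1+n+k}$ and $\|T^{k}x_{m}\|\to|\lambda|^{k}$, so substituting $x_{m}$ into the defining inequality drives its left-hand side to $|\lambda|^{1+k}$. For the right-hand side, the decomposition
\[
T^{*}T^{k}x_{m}=\lambda^{k}T^{*}x_{m}+T^{*}(T^{k}-\lambda^{k})x_{m}
\]
makes the second summand $o(1)$, yielding $\|T^{*}T^{k}x_{m}\|=|\lambda|^{k}\|T^{*}x_{m}\|+o(1)$. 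Combining with the defining inequality and dividing by $|\lambda|^{k}$ forces $\limsup_{m}\|T^{*}x_{m}\|\le|\lambda|$. Finally, the expansion
\[
\|T^{*}x_{m}-\overline{\lambda}x_{m}\|^{2}=\|T^{*}x_{m}\|^{2}-2\operatorname{Re}\bigl(\lambda\langle x_{m},Tx_{m}\rangle\bigr)+|\lambda|^{2},
\]
together with $\langle x_{m},Tx_{m}\rangle=\overline{\lambda}+o(1)$ (since $\|x_{m}\|=1$), gives $\limsup_{m}\|T^{*}x_{m}-\overline{\lambda}x_{m}\|^{2}\le 0$, which is the desired convergence.

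No step presents a real obstacle; the argument is essentially of Berberian type, and the only feature making it more intricate than the hyponormal case is the exponent bookkeeping and the extra $T^{k}$-factors. The one place where the hypothesis $\lambda\neq 0$ is genuinely used is in dividing by $|\lambda|^{k}$---without this, the powers of $\lambda$ introduced by the $T^{k}x$ substitution would swallow all information about $T^{*}x$.
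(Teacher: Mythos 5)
Your proposal is correct and follows essentially the same route as the paper: substitute the (approximate) eigenvectors into the defining inequality to get $\|T^{*}x\|\le|\lambda|\,\|x\|$ (resp. $\limsup_m\|T^{*}x_m\|\le|\lambda|$), then expand $\|T^{*}x-\overline{\lambda}x\|^{2}$ and use $Tx=\lambda x$ (resp. $(T-\lambda)x_m\to0$) to force it to be nonpositive. The only cosmetic difference is that you use the telescoping factorization of $T^{j}-\lambda^{j}$ where the paper uses a binomial expansion, which changes nothing of substance.
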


Clearly, if $\lambda =0$, then the above properties hold for
$(n,0)$-quasi-$*$-paranormal (that is $n$-$*$-paranormal) operators.
However [\cite{Tanahashi-Uchiyama}, Example 6] showed that, when
$\lambda =0$, the above properties do not hold even for
quasi-hyponormal (that is, $(0,1)$-quasi-$*$-paranormal) operators.

\begin{proof}
(1) We may assume that $\lambda \in \sigma_{p}(T)$, where
$\sigma_{p}(T)$ is the point spectrum of $T$.
 Let $x \in \mathrm{ker}(T-\lambda)$ and $||x||=1$. Then
 $$ ||T^{*}(T^{k}x)||^{n+1} \leq ||T^{1+n}(T^{k}x)||||T^{k}x||^{n},$$
 hence $$ |\lambda|^{k(n+1)}||T^{*}x||^{n+1} \leq |\lambda|^{k(n+1)}|\lambda|^{n+1}.$$
 That is,
 $$||T^{*}x|| \leq |\lambda|.$$
 Thus we have
 \begin{align*} \qquad \qquad  \qquad
      ||T^{*}x- \overline{\lambda}x||^{2}  &=  (T^{*}x- \overline{\lambda}x, T^{*}x- \overline{\lambda}x) \\ &=
      (T^{*}x,T^{*}x)-\overline{\lambda}(x,T^{*}x)-\lambda(T^{*}x,x)+|\lambda|^{2}  \\
        & =||T^{*}x||^{2} - \overline{\lambda}(Tx,x)-\lambda(x,Tx)+|\lambda|^{2}  \\  & \leq |\lambda|^{2} - |\lambda|^{2}-|\lambda|^{2}+|\lambda|^{2}=0.
     \end{align*}
 Hence $||T^{*}x- \overline{\lambda}x||^{2} \leq 0 $ and
 consequently $x \in \mathrm{ker}(T^{*}-\overline{\lambda}).$

 (2) Let $(T-\lambda)x_{m} \rightarrow 0$ for unit vectors
 $\{x_{m}\}$ and let $l \in \mathbb{N}$. Since $$T^{l}=(T-\lambda+\lambda)^{l}= \sum\limits_{j=1}^{l} \binom{l}{j}
 \lambda^{l-j}(T-\lambda)^{j}+\lambda^{l},
  $$ we have $(T^{l}-\lambda^{l})x_{m}\rightarrow 0.$ It then follows
  form \begin{align*}
        \big{|}||\lambda^{l}x_{m}|| -
  ||(T^{l}-\lambda^{l})x_{m}||\big{|} \leq  ||T^{l}x_{m}||  & = || \lambda^{l}x_{m} + (T^{l}-\lambda^{l})x_{m}||  \\ &  \leq  ||\lambda^{l}x_{m}|| +
||(T^{l}-\lambda^{l})x_{m}||
     \end{align*}
 that $||T^{l}x_{m}|| \rightarrow |\lambda|^{l}.$ In particular, we have
 \begin{equation} \label{eq 4.1} \qquad \qquad\qquad
||T^{1+n+k}x_{m}|| \rightarrow |\lambda|^{1+n+k} \makebox{ and }
||T^{k}x_{m}|| \rightarrow
  |\lambda|^{k}.
 \end{equation} Moreover,
  \begin{equation} \label{eq 4.2} \qquad \qquad\qquad
\big{|}||T^{*}\lambda^{k}x_{m}|| - ||T^{*}(T^{k}
-\lambda^{k})x_{m}|| \big{|}  \leq ||T^{*}T^{k}x_{m}||.
 \end{equation}
 Since $T$ is $(n,k)$-quasi-$*$-paranormal,
 $$ ||T^{*}(T^{k}x_{m})||^{n+1} \leq ||T^{1+n}(T^{k}x_{m})||||T^{k}x_{m}||^{n}. $$
 Then it follows form (\ref{eq 4.1}) and (\ref{eq 4.2}) that
 $$ \limsup\limits_{m \rightarrow \infty}||T^{*}x_{m} || \leq |\lambda|.$$
Since
 \begin{align*}
      ||T^{*}x_{m}- \overline{\lambda}x_{m}||^{2}  &=  (T^{*}x_{m}- \overline{\lambda}x_{m}, T^{*}x_{m}- \overline{\lambda}x_{m}) \\ &=
      (T^{*}x_{m},T^{*}x_{m})-\overline{\lambda}(x_{m},T^{*}x_{m})-\lambda(T^{*}x_{m},x_{m})+|\lambda|^{2}  \\
        & =||T^{*}x_{m}||^{2} - \overline{\lambda}(Tx_{m},x_{m})-\lambda(x_{m},Tx_{m})+|\lambda|^{2}
        \\  & =||T^{*}x_{m}||^{2} -
        \overline{\lambda}((T-\lambda)x_{m},x_{m})-\lambda(x_{m},(T-\lambda)x_{m})-|\lambda|^{2},
     \end{align*}
we have $$ \limsup\limits_{m \rightarrow \infty}||T^{*}x_{m}-
\overline{\lambda}x_{m}||^{2} \leq |\lambda|^{2}-|\lambda|^{2}=0.$$
 This establishes that $(T^{*}- \overline{\lambda})x_{m} \rightarrow 0.$
\end{proof}

\begin {remark} \label{4.2} \begin{upshape} We note that, with the Berberian faithful $*$-representation [\cite{Berberian}] at hand,
part (2) of Theorem \ref{4.1} can also be deduced from part (1) of
it.
\end{upshape}
\end{remark}

For $T \in L(H)$, let $\sigma_{p}(T)$, $\sigma_{jp}(T)$,
$\sigma_{a}(T)$ and $\sigma_{ja}(T)$ denote the point spectrum,
joint point spectrum, approximate point spectrum and joint
approximate point spectrum of $T$, respectively (see
[\cite{Aluthge-Wang}]). Many mathematicians shown that, for some
nonhyponormal operators $T$, the nonzero points of its point
spectrum and joint point spectrum are identical, the nonzero points
of its approximate point spectrum and joint approximate point
spectrum are identical (see [\cite{Aluthge-Wang,
Uchiyama-Tanahashi-Lee, Yang-Yuan, Yuan-Gao}]). The following
corollary, which is an immediate consequence of Theorem \ref{4.1},
extends this result to the class of $(n,k)$-quasi-$*$-paranormal
operators.

\begin {corollary} \label{4.3} If $T$ is $(n,k)$-quasi-$*$-paranormal,
then $$\sigma_{jp}(T) \backslash \{0\}=\sigma_{p}(T)\backslash \{0\}
\makebox{ and } \sigma_{ja}(T)\backslash
\{0\}=\sigma_{a}(T)\backslash \{0\}.$$
\end{corollary}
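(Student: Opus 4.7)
The plan is to deduce both equalities directly from Theorem~\ref{4.1}, since the corollary is essentially a repackaging of parts (1) and (2) of that theorem in the language of joint spectra. The containments $\sigma_{jp}(T)\subseteq \sigma_p(T)$ and $\sigma_{ja}(T)\subseteq \sigma_a(T)$ are immediate from the definitions, so only the reverse inclusions (restricted to nonzero points) require an argument.

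For the first equality, I would start with a nonzero $\lambda\in\sigma_p(T)$ and pick a nonzero eigenvector $x$ with $Tx=\lambda x$. Theorem~\ref{4.1}(1) gives $T^*x=\overline{\lambda}x$, which means the same $x$ witnesses $\lambda\in\sigma_{jp}(T)$. Hence $\sigma_p(T)\setminus\{0\}\subseteq \sigma_{jp}(T)\setminus\{0\}$, and combined with the trivial inclusion this yields equality away from $0$.

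For the second equality, I would take a nonzero $\lambda\in\sigma_a(T)$ and choose a sequence of unit vectors $\{x_m\}$ with $(T-\lambda)x_m\to 0$. Theorem~\ref{4.1}(2) immediately gives $(T^*-\overline{\lambda})x_m\to 0$, so $\{x_m\}$ witnesses $\lambda\in\sigma_{ja}(T)$. Again the trivial inclusion closes the loop.

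There is no real obstacle here: the content of the corollary is Theorem~\ref{4.1} itself, and the proof is just a two-line bookkeeping argument for each equality. The only thing to be slightly careful about is confirming that the unit-vector normalization used in the definition of $\sigma_a(T)$ matches the hypothesis of Theorem~\ref{4.1}(2), which it does by construction.
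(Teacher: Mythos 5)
Your argument is correct and is exactly the one the paper intends: the corollary is stated as an immediate consequence of Theorem~\ref{4.1}, with parts (1) and (2) supplying the reverse inclusions for the point and approximate point spectra respectively, just as you describe.
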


In the next example, we invoke Uchiyama's example again to
illustrate that the above equalities do not hold even for paranormal
operators.

\begin {example} \label{4.4} \begin{upshape} Recall that Uchiyama
have constructed in [\cite{Uchiyama}, Theorem] an operator defined
by
$$T=\left(
                        \begin{array}{cc}
                          1 & \alpha e_{1} \otimes e_{1}  \\
                          0 & U+1 \\
                        \end{array}
                      \right) \makebox{\ on \ } H=\mathbb{C}e_{1} \oplus
                      l_{2}(\mathbb{N}),
$$ where $U$ is the unilateral right shift operator on
$l_{2}(\mathbb{N})$ with the canonical orthogonal basis
$\{e_{n}\}_{n=1}^{\infty}$. He proved that $T$ is paranormal for $0
< \alpha < \frac{1}{4}$ and $\mathrm{ker}(T-1)=\mathbb{C}e_{1}
\oplus \{0\}$. A step further, we have
$\mathrm{ker}(T^{*}-1)=\{ae_{1} \oplus (be_{1} + a\alpha e_{2}): a,b
\in \mathbb{C} \}$, hence
$$\mathrm{ker}(T-1) \cap \mathrm{ker}(T^{*}-1) =\{0\}.$$
Consequently, $1 \in \sigma_{p}(T) \backslash  \sigma_{jp}(T).$
Evidently, $1 \in \sigma_{a}(T)$. Next, we show that $1 \notin
\sigma_{ja}(T)$. Otherwise, there exists a sequence
$\{x_{n}\}_{n=1}^{\infty}$ of unit vectors satisfying $(T - 1)x_{n}
\rightarrow 0$ and $(T^{*} - 1)x_{n} \rightarrow 0$. For $n \in
\mathbb{N}$, let $x_{n}=a_{n}e_{1} \oplus (b_{1,n}, b_{2,n}, \cdots)
\in \mathbb{C}e_{1} \oplus l_{2}(\mathbb{N})$. Then
\begin{equation} \label{eq 4.3} \qquad \qquad\qquad\qquad\qquad
a_{n}^{2}+ \sum\limits_{k=1}^{\infty}b_{k,n}^{2} =1,
 \end{equation}
 \begin{equation} \label{eq 4.4} \qquad \qquad\qquad\qquad\qquad
\alpha b_{1,n}^{2}+ \sum\limits_{k=1}^{\infty}b_{k,n}^{2}
\rightarrow 0,
 \end{equation}
 and
\begin{equation} \label{eq 4.5} \qquad \qquad\qquad\qquad\qquad
(\alpha a_{n} + b_{2,n})^{2}+ \sum\limits_{k=3}^{\infty}b_{k,n}^{2}
\rightarrow 0.
 \end{equation}
 By (\ref{eq 4.4}), we have $\sum\limits_{k=1}^{\infty}b_{k,n}^{2}
\rightarrow 0$, $\sum\limits_{k=2}^{\infty}b_{k,n}^{2} \rightarrow
0$ and $b_{2,n} \rightarrow 0.$ Then by (\ref{eq 4.3}), we have
$a_{n}^{2} \rightarrow 1$. Thus, we have $(\alpha a_{n} +
b_{2,n})^{2}+ \sum\limits_{k=3}^{\infty}b_{k,n}^{2}=\alpha^{2}
a_{n}^{2}+2\alpha a_{n}b_{2,n}+\sum\limits_{k=2}^{\infty}b_{k,n}^{2}
\rightarrow \alpha^{2}$, which contradicts to (\ref{eq 4.5}).

\end{upshape}
\end{example}

\begin {corollary} \label{4.5} If $T$ is $(n,k)$-quasi-$*$-paranormal and $\lambda \neq \mu$,
then $$\mathrm{ker}(T-\lambda) \, \bot \, \mathrm{ker}(T-\mu).$$
\end{corollary}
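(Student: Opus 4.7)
The plan is a short two-case orthogonality argument driven entirely by Theorem \ref{4.1}(1). Fix $x \in \mathrm{ker}(T - \lambda)$ and $y \in \mathrm{ker}(T - \mu)$; it suffices to show $(x,y) = 0$.

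Since $\lambda \neq \mu$, at least one of them is nonzero. First I would dispose of the case $\lambda \neq 0$ and $\mu \neq 0$: Theorem \ref{4.1}(1) gives $T^{*}x = \overline{\lambda}\,x$ and $T^{*}y = \overline{\mu}\,y$, so that
$$\lambda (x,y) = (Tx, y) = (x, T^{*}y) = \mu (x,y),$$
which forces $(\lambda - \mu)(x,y) = 0$ and hence $(x,y) = 0$.

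Next I would handle the case where one eigenvalue vanishes. If $\mu = 0$ and $\lambda \neq 0$, Theorem \ref{4.1}(1) still applies to $x$, yielding $T^{*}x = \overline{\lambda}\,x$, whence
$$\overline{\lambda}\,(x,y) = (T^{*}x, y) = (x, Ty) = (x, 0) = 0;$$
since $\lambda \neq 0$ this gives $(x,y) = 0$. The symmetric case $\lambda = 0$ and $\mu \neq 0$ is obtained by interchanging the roles of $x$ and $y$.

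The only subtlety is precisely this case split: Theorem \ref{4.1}(1) asserts the self-adjoint reduction $T^{*}x = \overline{\lambda}\,x$ only for nonzero $\lambda$, so when one eigenvalue is zero the argument must be steered through the nonzero one. Beyond that bookkeeping, the corollary reduces to the standard orthogonality of eigenvectors for distinct eigenvalues of an operator whose nonzero eigenvectors are simultaneously $T^{*}$-eigenvectors.
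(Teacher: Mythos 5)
Your proof is correct and follows essentially the same route as the paper: everything rests on Theorem \ref{4.1}(1) applied to the eigenvector whose eigenvalue is nonzero, plus the standard adjoint computation. The paper merely compresses your case split by assuming WLOG $\mu \neq 0$ and noting that the single identity $\lambda(x,y)=(Tx,y)=(x,T^{*}y)=\mu(x,y)$ already covers the possibility $\lambda=0$, since $Tx=\lambda x$ needs no nonvanishing hypothesis.
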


\begin{proof} Without loss of generality, we may suppose that $\mu \neq 0$. Let $x \in \mathrm{ker}(T-\lambda)$ and $y \in
\mathrm{ker}(T-\mu)$. Then by Theorem \ref{4.1}(1), we have
$$\lambda(x,y)=(Tx,y)=(x,T^{*}y)=(x,\overline{\mu}y)=\mu(x,y),$$
which implies $(x,y)=0$ and so $\mathrm{ker}(T-\lambda)\, \bot \,
\mathrm{ker}(T-\mu)$.
\end{proof}

\begin {corollary} \label{4.6} If $T$ is $(n,k)$-quasi-$*$-paranormal,
then $\mathrm{ker}(T^{1+k})=\mathrm{ker}(T^{2+k})$ and
$\mathrm{ker}(T-\lambda)=\mathrm{ker}(T-\lambda)^{2}$ when $0 \neq
\lambda \in \mathbb{C}$.
\end{corollary}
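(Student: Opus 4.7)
The plan is to dispatch the two kernel equalities separately, each reducing to a tool already proved in the excerpt: Theorem \ref{2.1}(4) for the first equality, and Theorem \ref{4.1}(1) for the second. The forward inclusions $\mathrm{ker}(T^{1+k})\subseteq \mathrm{ker}(T^{2+k})$ and $\mathrm{ker}(T-\lambda)\subseteq \mathrm{ker}(T-\lambda)^{2}$ are automatic, so the only content lies in the reverse inclusions.

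For $\mathrm{ker}(T^{2+k})\subseteq \mathrm{ker}(T^{1+k})$, I would first shift the index by applying Theorem \ref{2.1}(4) to note that $T$ is also $(n,k+1)$-quasi-$*$-paranormal. Evaluated at an arbitrary $x\in \mathrm{ker}(T^{2+k})$, the defining inequality becomes
$$\|T^{1+n}T^{k+1}x\|^{1/(1+n)}\,\|T^{k+1}x\|^{n/(1+n)} \geq \|T^{*}T^{k+1}x\|.$$
The key observation is that $T^{1+n}T^{k+1}x=T^{n}(T^{2+k}x)=0$, so the left-hand side vanishes and hence $T^{*}T^{k+1}x=0$. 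A one-line adjoint manipulation
$$\|T^{k+1}x\|^{2}=\langle T^{k+1}x,T^{k+1}x\rangle=\langle T^{k}x,T^{*}T^{k+1}x\rangle=0$$
then yields $T^{k+1}x=0$, as desired. This works uniformly in $n\geq 0$, since a vanishing factor still collapses the product.

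For $\mathrm{ker}(T-\lambda)^{2}\subseteq \mathrm{ker}(T-\lambda)$ with $\lambda\neq 0$, I would take $x\in \mathrm{ker}(T-\lambda)^{2}$ and set $y:=(T-\lambda)x\in \mathrm{ker}(T-\lambda)$. Here the hypothesis $\lambda\neq 0$ is exactly what licenses Theorem \ref{4.1}(1), giving $y\in \mathrm{ker}(T^{*}-\overline{\lambda})$. Moving $T^{*}-\overline{\lambda}$ back across the inner product,
$$\|y\|^{2}=\langle (T-\lambda)x,y\rangle=\langle x,(T^{*}-\overline{\lambda})y\rangle=0,$$
so $y=0$ and $x\in \mathrm{ker}(T-\lambda)$.

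Honestly, there is no serious obstacle: both halves are immediate consequences of machinery already in place. The only subtlety worth flagging is the index shift $k\mapsto k+1$ provided by Theorem \ref{2.1}(4); without it, the exponent on the left would be $T^{1+n}T^{k}x$, whose vanishing does not follow from $T^{2+k}x=0$. It is precisely this shift that makes the left-hand side of the paranormal inequality automatically zero on $\mathrm{ker}(T^{2+k})$.
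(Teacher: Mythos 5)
Your proposal is correct, and its overall skeleton matches the paper's: both halves hinge on the index shift from Theorem \ref{2.1}(4) and on Theorem \ref{4.1}(1), respectively. The only divergence is in the first equality. The paper pushes one step further, invoking Theorem \ref{2.1}(2) to pass from $(n,k+1)$-quasi-$*$-paranormal to $(n+1,k)$-quasiparanormal; for that inequality, $x\in\mathrm{ker}(T^{n+k+2})$ makes the left-hand side vanish and immediately forces $\|T^{k+1}x\|=0$, giving $\mathrm{ker}(T^{n+k+2})=\mathrm{ker}(T^{1+k})$ and hence the claim. You instead stay with the $*$-inequality for the $(n,k+1)$ case: vanishing of the left-hand side only yields $T^{*}T^{k+1}x=0$, and you recover $T^{k+1}x=0$ through the pairing $\|T^{k+1}x\|^{2}=\langle T^{k}x,T^{*}T^{k+1}x\rangle$. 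This buys you one fewer citation (no appeal to Theorem \ref{2.1}(2)) at the cost of the extra adjoint step; the paper's route, in exchange, delivers the slightly stronger conclusion $\mathrm{ker}(T^{1+k})=\mathrm{ker}(T^{n+k+2})$ in one stroke. Your flagged subtlety about why the shift $k\mapsto k+1$ is indispensable is well taken (it is genuinely needed when $n=0$; for $n\geq 1$ the vanishing $T^{1+n}T^{k}x=0$ would in fact already follow, so the shift is what makes the argument uniform in $n$). Your treatment of the second equality, taking $y=(T-\lambda)x$ and using $\mathrm{ker}(T-\lambda)\subseteq\mathrm{ker}(T^{*}-\overline{\lambda})$, is the same orthogonality argument the paper phrases as $(T-\lambda)\mathrm{ker}(T-\lambda)^{2}\subseteq\mathrm{ker}((T-\lambda)^{*})\cap(T-\lambda)H=\{0\}$.
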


\begin{proof} Since $T$ is $(n,k)$-quasi-$*$-paranormal, it is $(n,k+1)$-quasi-$*$-paranormal by Theorem
\ref{2.1}(4), and hence it is $(n+1,k)$-quasiparanormal by Theorem
\ref{2.1}(2). Therefore
$\mathrm{ker}(T^{n+k+2})=\mathrm{ker}(T^{1+k})$ and so
$\mathrm{ker}(T^{1+k})=\mathrm{ker}(T^{2+k})$. When $0 \neq \lambda
\in \mathbb{C}$, we need only to show that $\mathrm{ker}(T-\lambda)
\supseteq \mathrm{ker}(T-\lambda)^{2}$, since
$\mathrm{ker}(T-\lambda) \subseteq \mathrm{ker}(T-\lambda)^{2}$ is
clear. Theorem \ref{4.1}(1) implies that
$(T-\lambda)^{*}(T-\lambda)\mathrm{ker}(T-\lambda)^{2}=\{0\}$, hence
$(T-\lambda)\mathrm{ker}(T-\lambda)^{2} \subseteq
\mathrm{ker}(T-\lambda)^{*} \cap (T-\lambda)H=\{0\}$ and so
$\mathrm{ker}(T-\lambda)^{2} \subseteq \mathrm{ker}(T-\lambda)$.
\end{proof}

An operator $T \in L(H)$ is said to be have single valued extension
property at $\lambda_{0} \in \mathbb{C}$ (SVEP at $\lambda_{0}$ for
brevity) if for every open neighborhood $\mathcal {U}$ of
$\lambda_{0}$, the only analytic function $f:\mathcal {U}
\rightarrow H$ which satisfies the equation $(\lambda I-T)f(\lambda)
= 0$ for all $\lambda \in \mathcal {U}$ is the constant function $f
\equiv 0$. Let $\mathcal {S}(T):=\{\lambda \in \mathbb{C}:T
\makebox{ does not have the SVEP at } \lambda \}$. An operator $T
\in L(H)$ is said to be have SVEP if $\mathcal {S}(T) =
\varnothing$.

\begin {corollary} \label{4.7} If $T$ is $(n,k)$-quasi-$*$-paranormal,
then $T$ has \begin{upshape}SVEP\end{upshape}.
\end{corollary}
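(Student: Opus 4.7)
The plan is to deduce SVEP from the orthogonality of eigenspaces established in Corollary \ref{4.5}. Fix $\lambda_{0}\in\mathbb{C}$, let $\mathcal{U}$ be an open connected neighborhood of $\lambda_{0}$ (shrinking to an open disk if necessary), and suppose $f:\mathcal{U}\to H$ is an analytic function satisfying $(\lambda I-T)f(\lambda)=0$ for every $\lambda\in\mathcal{U}$. I want to show $f\equiv 0$.

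First I would observe that for every $\lambda\in\mathcal{U}\setminus\{0\}$ one has $f(\lambda)\in\mathrm{ker}(T-\lambda)$, and Corollary \ref{4.5} then gives $f(\lambda)\perp f(\mu)$ whenever $\lambda,\mu\in\mathcal{U}\setminus\{0\}$ are distinct. Next I would fix any point $\mu_{0}\in\mathcal{U}\setminus\{0\}$ (such a point exists because $\mathcal{U}$ is a nonempty open set in $\mathbb{C}$) and consider the scalar-valued function
\[
 h(\lambda):=\langle f(\lambda),f(\mu_{0})\rangle,
\]
which is analytic on $\mathcal{U}$. By the orthogonality just noted, $h$ vanishes on $\mathcal{U}\setminus\{0,\mu_{0}\}$; this set has accumulation points in $\mathcal{U}$, so the identity theorem forces $h\equiv 0$ on $\mathcal{U}$. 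Evaluating at $\mu_{0}$ yields $\|f(\mu_{0})\|^{2}=h(\mu_{0})=0$, hence $f(\mu_{0})=0$. Since $\mu_{0}$ was an arbitrary nonzero point of $\mathcal{U}$, we conclude $f\equiv 0$ on $\mathcal{U}\setminus\{0\}$, and by continuity of $f$ the equality extends to all of $\mathcal{U}$.

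There is no real obstacle here once Corollary \ref{4.5} is in hand; the only mildly delicate point is covering the case $\lambda_{0}=0$, which is handled simply by noting that every open neighborhood of $0$ still contains plenty of nonzero points $\mu_{0}$ to run the orthogonality/identity-theorem argument. Since $\lambda_{0}\in\mathbb{C}$ was arbitrary, $\mathcal{S}(T)=\varnothing$ and $T$ has SVEP.
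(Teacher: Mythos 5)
Your argument is correct, and it follows the same route the paper takes: the paper's proof of Corollary \ref{4.7} is a one-line deduction from Corollary \ref{4.5} together with a cited lemma ([\cite{Yuan-Ji}, Lemma 3.5]) stating precisely that mutual orthogonality of eigenspaces for distinct eigenvalues forces SVEP; what you have done is supply a full proof of that cited implication (orthogonality plus analyticity of $f$, then the identity theorem applied to $h(\lambda)=\langle f(\lambda),f(\mu_{0})\rangle$), which is a worthwhile addition rather than a different strategy. Two small remarks: (i) you can avoid the identity theorem altogether, since for fixed $\mu_{0}\neq 0$ one has $\langle f(\lambda),f(\mu_{0})\rangle=0$ for all $\lambda\neq\mu_{0}$, and letting $\lambda\to\mu_{0}$ gives $\|f(\mu_{0})\|^{2}=0$ by continuity; (ii) the paper also records an alternative route, via Corollary \ref{4.6} ($\mathrm{ker}(T-\lambda)=\mathrm{ker}(T-\lambda)^{2}$ for $\lambda\neq 0$ and $\mathrm{ker}(T^{1+k})=\mathrm{ker}(T^{2+k})$) combined with [\cite{Laursen}, Proposition 1.8], i.e.\ finite ascent at every point implies SVEP, which handles the point $\lambda=0$ by a different mechanism than your continuity argument but reaches the same conclusion. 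Your handling of the case $\lambda_{0}=0$ and of possibly disconnected neighborhoods (restricting to disks) is fine as written.
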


\begin{proof} It follows directly form Corollary \ref{4.5} and [\cite{Yuan-Ji},
Lemma 3.5] (or, Corollary \ref{4.6} and [\cite{Laursen}, Proposition
1.8]).
\end{proof}

SVEP, which is an important property in local spectral theory and
Fredholm theory, has a number of consequences. We list in the next
corollary some of these.

For $T \in L(H)$, let $\sigma_{W}(T)$, $\sigma_{SF_{+}^{-}}(T)$,
$\sigma_{BW}(T)$, $\sigma_{SBF_{+}^{-}}(T)$ and $\sigma_{LD}(T)$
denote the Weyl spectrum, upper semi-Weyl spectrum, B-Weyl spectrum,
upper semi-B-Weyl spectrum and left Drazin spectrum of $T$,
respectively (see [\cite{Berkani-Koliha,Zeng-Zhong}]).

We say that $T \in L(H)$ is algebraically
$(n,k)$-quasi-$*$-paranormal, if there exist a nonconstant complex
polynomial $p$ such that $p(T)$ is $(n,k)$-quasi-$*$-paranormal. Let
$H(\sigma(T))$ denote the space of all functions analytic on some
open neighborhood $\mathcal {U}$ containing $\sigma(T)$. If $T$ has
SVEP then so does $f(T)$ for all $f \in H(\sigma(T))$; conversely,
if $p(T)$ has SVEP for some nonconstant polynomial $p$ then $T$ does
([\cite{Aiena}, Theorem 2.40]). Hence algebraically
$(n,k)$-quasi-$*$-paranormal operators have SVEP by Corollary
\ref{4.7}.

\begin {corollary} \label{4.8} Let $f \in H(\sigma(T))$. If $T$ or $T^{*}$ is algebraically
$(n,k)$-quasi-$*$-paranormal, then

$(1)$ $f(T)$ and $f(T^{*})$ obey to $a$-Browder's theorem.

$(2)$ $f(T)$ possesses property $(gb)$, when $T^{*}$ is
algebraically $(n,k)$-quasi-$*$-paranormal.

$(3)$ $\sigma_{W}(f(T))= f(\sigma_{W}(T))$ and analogous equality
holds for $\sigma_{SF_{+}^{-}}(\cdot)$.

$(4)$ if $f$ is nonconstant on each component of open neighborhood
$\mathcal {U}$ containing $\sigma(T)$, then $\sigma_{BW}(f(T))=
f(\sigma_{BW}(T))$ and analogous equality holds for
$\sigma_{SBF_{+}^{-}}(\cdot)$.
\end{corollary}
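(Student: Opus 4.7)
The entire corollary will be driven by a single underlying fact extracted from Corollary \ref{4.7} and the paragraph preceding the statement: if $T$ (respectively $T^{*}$) is algebraically $(n,k)$-quasi-$*$-paranormal, then $T$ (respectively $T^{*}$) has SVEP. Moreover, SVEP is preserved under the functional calculus, so that for every $f \in H(\sigma(T))$ the operator $f(T)$ (respectively $f(T)^{*}=\bar f(T^{*})$) also has SVEP. The plan is to translate each of the four assertions into a consequence of the SVEP of $f(T)$ or of its adjoint, and then to quote standard results from local spectral theory.

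For (1), I would recall the classical fact that if either $S$ or $S^{*}$ has SVEP then $S$ obeys $a$-Browder's theorem (see, e.g., Aiena's monograph). Applying this with $S=f(T)$ and with $S=f(T^{*})$, using that SVEP passes to analytic functions of $T$ or $T^{*}$, gives the statement for both $f(T)$ and $f(T^{*})$ at once. For (3), I would invoke the spectral mapping theorems for the Weyl and upper semi-Weyl spectra, which are known to hold whenever $T$ or $T^{*}$ has SVEP; once more, functional calculus compatibility is the only thing to check. For (4), the analogous statements for $\sigma_{BW}$ and $\sigma_{SBF_{+}^{-}}$ hold under the same SVEP hypothesis provided one imposes the nonconstancy condition on $f$ (this is the standard hypothesis needed to rule out pathologies arising from poles of finite rank collapsing under $f$); I would cite the B-Weyl and B-Fredholm versions from Berkani–Koliha and Zeng–Zhong already referenced in the paper.

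For (2), property $(gb)$ follows from the generalized $a$-Browder's theorem together with an equality of the form $\sigma_{SBF_{+}^{-}}(f(T)) = \sigma_{LD}(f(T))$, which is known to hold when $T^{*}$ has SVEP. Since $T^{*}$ being algebraically $(n,k)$-quasi-$*$-paranormal gives SVEP for $T^{*}$, hence for $f(T)^{*}$, the required equality is available, and $(gb)$ for $f(T)$ follows by the standard characterization.

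The main obstacle is bookkeeping rather than mathematics: one must make sure that the version of SVEP obtained from Corollary \ref{4.7} and the algebraic-polynomial argument is strong enough to feed into each of the four cited results, and that the passage from $T$ to $f(T)$, and from $T^{*}$ to $f(T)^{*}$, is justified in each case (in particular for (4) where nonconstancy of $f$ on components of the neighborhood of $\sigma(T)$ is essential for the B-Fredholm spectral mapping theorems). No new operator-theoretic work beyond SVEP is needed; the proof is essentially a citation chain built on Corollary \ref{4.7}.
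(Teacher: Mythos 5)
Your proposal is correct and follows essentially the same route as the paper: establish SVEP for the algebraically $(n,k)$-quasi-$*$-paranormal operator (Corollary \ref{4.7} plus the polynomial/analytic functional calculus stability of SVEP) and then conclude each of (1)--(4) by citing the standard SVEP-based results on $a$-Browder's theorem, property $(gb)$, and the spectral mapping theorems for the (semi-)Weyl and (semi-)B-Weyl spectra. The only cosmetic difference is in (4), where the paper reduces the $\sigma_{BW}$ statement to the $\sigma_{SBF_{+}^{-}}$ one via Amouch--Zguitti and then passes through the identity $\sigma_{SBF_{+}^{-}}(T)\cup(\mathcal{S}(T)\cap\mathcal{S}(T^{*}))=\sigma_{LD}(T)$ and the left Drazin spectral mapping theorem, while you cite the B-Weyl/B-Fredholm spectral mapping results directly; both are the same citation chain in substance.
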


\begin{proof} (1), (2) and (3) follow form [\cite{Aiena}, Corollary 3.73],
[\cite{Zeng-Zhong}, Theorem 2.14] and [\cite{Aiena}, Corollary
3.72], respectively.

(4) By [\cite{Amouch-Zguitti}, Theorem 2.4], it remains to show that
$\sigma_{SBF_{+}^{-}}(f(T))= f(\sigma_{SBF_{+}^{-}}(T))$. From
[\cite{Amouch-Zguitti}, Theorem 2.1] and [\cite{Zeng-Zhong}, Lemma
2.3], it follows that $\sigma_{SBF_{+}^{-}}(T) \cup (\mathcal {S}(T)
\cap \mathcal {S}(T^{*})) = \sigma_{LD}(T)$ for any $T \in L(H)$.
And then the assertion follows form the fact that left Drazin
spectrum satisfies the spectral mapping theorem for such a function
$f$.
\end{proof}

{\bf Acknowledgements.} The authors express their sincere thanks to
Professor Jiangtao Yuan for sending us the papers
[\cite{Uchiyama-Tanahashi-Lee,Yang-Yuan,Yuan-Ji}].




\begin{thebibliography}{99}

\bibitem{Aiena}\textit{P. Aiena}, Fredholm and Local Spectral Theory, with Applications to
Multipliers, Kluwer Academic Publishers, Dordrecht, The Netherlands,
2004.

\bibitem{Aluthge-Wang}\textit{A. Aluthge} and \textit{D. Wang}, $w$-hyponormal operators II, Integral Equations Operator Theory \textbf{37} (2000), 324--331.

\bibitem{Amouch-Zguitti}\textit{M. Amouch} and \textit{H. Zguitti},
B-Fredholm and Drazin invertible operators through localized SVEP,
Math. Bohem. \textbf{136} (2011), 39--49.

\bibitem{Ando}\textit{T. Ando}, Operators with a norm condition, Acta Sci. Math. (Szeged) \textbf{33} (1972), 169--178.

\bibitem{Arora-Thukral}\textit{S.C. Arora} and \textit{J.K. Thukral}, On a class of operators,
 Kyngpook Math. J. \textbf{21} (1986), 381--386.

\bibitem{Berberian}\textit{S.K. Berberian}, Approximate proper vectors, Proc. Amer. Math. Soc. \textbf{13} (1962),
111--114.

\bibitem{Berkani-Koliha}\textit{M. Berkani} and \textit{J.J. Koliha}, Weyl type theorems for bounded linear
operators, Acta Sci. Math. (Szeged) \textbf{69} (2003), 359--376.

\bibitem{Campbell-Gupta}\textit{S.L. Campbell} and \textit{B.C. Gupta}, On $k$-quasihyponormal operators,
Math. Japon. \textbf{23} (1978), 185--189.

  \bibitem{Duggal-Djordjevic}\textit{B.P. Duggal} and \textit{S.V. Djordjevi\'c}, Generalized Weyl's theorem for a
class of operators satisfying a norm condition, Math. Proc. Roy.
Irish Acad. \textbf{104}A (2004), 75--81.

\bibitem{Duggal-Jeon-Kim}\textit{B.P. Duggal}, \textit{I.H. Jeon} and \textit{I.H. Kim},
 On $*$-paranormal contractions and properties for $*$-class $A$ operators, Linear Algebra Appl. \textbf{436} (2012), 954--962.

 \bibitem{Furuta}\textit{T. Furuta}, On the class of paranormal
operators, Proc. Japan Acad. \textbf{43} (1967), 594--598.

\bibitem{Halmos}\textit{P.R. Halmos}, Normal dilations and extensions of
operators, Summa Brasil. Math. \textbf{2} (1950), 125--134.

\bibitem{Istratescu-Istratescu}\textit{I. Istr\v{a}\c{t}escu} and \textit{V. Istr\v{a}\c{t}escu},
 On some classes of operators. I,  Proc. Japan Acad. \textbf{43} (1967), 605--606.

 \bibitem{Laursen}\textit{K.B. Laursen}, Operators with finite ascent,
 Pacific J. Math. \textbf{152} (1992), 323--336.

 \bibitem{Laursen-Neumann}\textit{K.B. Laursen} and \textit{M.M. Neumann}, An introduction to local
spectral theory, Oxford University Press, 2000.

 \bibitem{Lee-Lee-Rhoo}\textit{M.Y. Lee}, \textit{S.H. Lee} and \textit{C.S.
 Rhoo}, Some remark on the structure of $k*$-paranormal operators,
 Kyngpook Math. J. \textbf{35} (1995), 205--211.

\bibitem{McCarthy}\textit{C.A. McCarthy}, $c_{p}$, Israel J. Math. \textbf{5} (1967), 249--271.

\bibitem{Mecheri-SM}\textit{S. Mecheri},
 Isolated points of spectrum of $k$-quasi-$*$-class $A$ operators, Studia Math. \textbf{208} (2012), 87--96.

\bibitem{Mecheri-AFA}\textit{S. Mecheri},
 On quasi-$*$-paranormal operators, Ann. Funct. Anal. \textbf{3} (2012),
 86--91.

 \bibitem{Patel}\textit{S.M. Patel}, Contributions to the study of
spectraloid operators, Ph. D. Thesis, Delhi University 1974.

\bibitem{Shen-Zuo-Yang}\textit{J.L. Shen}, \textit{F. Zuo} and \textit{C.S. Yang}, On operators satisfying $T^{*}|T^{2}|T \geq
 T^{*}|T^*|^{2}T$, Acta Math. Sinica (English Ser.) \textbf{26} (2010), 2109--2116.

\bibitem{Tanahashi-Uchiyama} \textit{K. Tanahashi} and \textit{A.
Uchiyama}, Isolated points of spectrum of $p$-quasiyponormal
operators, Linear Algebra Appl. \textbf{341} (2002), 345--350.


\bibitem{Uchiyama}\textit{A. Uchiyama}, An example of non-reducing eigenspace of a paranormal operator,
Nihonkai Math. J. \textbf{14} (2003), 121--123.

\bibitem{Uchiyama-Tanahashi-Lee}\textit{A. Uchiyama}, \textit{K. Tanahashi} and \textit{J.I. Lee}, Spectrum of class $A(s,
t)$ operators, Acta Sci. Math. (Szeged) \textbf{70} (2004),
279--287.

\bibitem{Yang-Yuan}\textit{C.S. Yang} and \textit{J.T. Yuan},
 Spectrum of class $wF(p, r, q)$ operators for $p+r \leq 1$ and $q \geq 1$, Acta Sci. Math. (Szeged) \textbf{71} (2005), 767--779.


\bibitem{Yuan-Gao}\textit{J.T. Yuan} and \textit{Z.S. Gao},
 Weyl spectrum of class $A(n)$ and $n$-paranormal operators, Integral Equations Operator Theory \textbf{60} (2008),
 289--298.

 \bibitem{Yuan-Ji}\textit{J.T. Yuan} and \textit{G.X. Ji},
 On ($n,k$)-quasiparanormal operators,  Studia Math. \textbf{209} (2012),
 289--301.

\bibitem{Zeng-Zhong}\textit{Q.P. Zeng} and \textit{H.J. Zhong},
A note on property $gb$ and perturbations,  Abstract and Applied
Analysis \textbf{2012} (2012), Article ID 523986, 10 pages.




\end{thebibliography}
\end{document}